\documentclass[journal,print]{ieeecolor} 
\usepackage{amsmath,amssymb,psfrag,latexsym,times,bbm,amstext,wasysym,hyperref,url,graphicx,algorithm,algorithmic,tikz,float,multicol,caption,subcaption,balance,dsfont}
\usepackage{lcsys}
\usepackage{cite}
\usepackage{amsmath,amssymb,amsfonts,bm,bbold} 
\usepackage{mathtools}
\usepackage{algorithmic}
\usepackage{graphicx}
\usepackage{graphics}
\usepackage{textcomp}
\usepackage{dsfont}
\usepackage{epstopdf}
\usepackage{orcidlink}
\usepackage{enumerate}
\usepackage{multicol}
\usepackage{multirow}
\usepackage{empheq}
\usepackage{times}
\usepackage{subcaption}

\setlength{\parindent}{1em}

\hypersetup{
    colorlinks=true,
    linkcolor=blue,
    filecolor=magenta,      
    urlcolor=cyan,
    citecolor=blue,
    pdfpagemode=FullScreen,
    }

\newtheorem{thm}{Theorem}

\newtheorem{prop}{Proposition}

\newtheorem{problem}{Problem}

\newcommand{\tr}{{\rm tr}}

\newenvironment{taggedproblem}[1]
 {\taggedproblemx}
 {\endtaggedproblemx}

\def\BibTeX{{\rm B\kern-.05em{\sc i\kern-.025em b}\kern-.08em
    T\kern-.1667em\lower.7ex\hbox{E}\kern-.125emX}}
\markboth{\journalname }
{Author \MakeLowercase{\textit{et al.}}: Preparation of Papers for textsc{IEEE Control Systems
Letters} (November 2021)}

\begin{document}
\title{
Collective Steering: Tracer-Informed Dynamics
}

\author{Asmaa Eldesoukey, Mahmoud Abdelgalil, and Tryphon T. Georgiou
\thanks{Asmaa Eldesoukey and Tryphon T. Georgiou are with the Department of Mechanical and Aerospace Engineering, University of California, Irvine, Irvine, CA, USA, (emails: aeldesou@uci.edu,tryphon@uci.edu).}
\thanks{Mahmoud Abdelgalil is with the Department of Electrical and Computer Engineering, University of California, San Diego, La Jolla, CA, USA, (email:mabdelgalil@ucsd.edu).}
\thanks{The research was supported in part by the NSF under ECCS-2347357, AFOSR under FA9550-24-1-0278, and ARO under W911NF-22-1-0292.}}
\maketitle
\thispagestyle{plain}
\pagestyle{plain}

\begin{abstract}
We consider control and inference problems where
control protocols and internal dynamics are informed by two types of constraints.
Our data consist of i) statistics on the ensemble and ii) trajectories or final dispositions of selected tracer particles embedded in the flow. Our aim is i') to specify a control protocol, realizing a flow that meets such constraints or ii') to recover the internal dynamics that are consistent with such a data set.
We analyze these problems in the setting of linear flows and Gaussian distributions. The control cost is taken to be a suitable action integral constrained by either the trajectories of tracer particles or their terminal placements. 
%
\end{abstract}
%
%
\section{Introduction}

The theme of this work is the problem of steering a collection of particles/agents that obey a universal time-varying feedback law; the law represents a common control protocol broadcast to all agents in the collective. In this paper, we are interested in both control and inference problems. The data for these problems concern, on one hand, the collective in the form of marginal distribution constraints, and on the other, the locations or trajectories of specific {\em tracer} particles, namely tagged agents in the flow. Whether we consider inference or control, the tracer particles are to reveal internal dynamics or constrain a sought-after control protocol.

The practical motivation for this class of problems stems from a range of applications in engineering, physics, and biology. 
To mention a few, this is the case in particle systems steered by externally regulated control potentials \cite{schneider2019optimal,fu2021maximal}. Similarly, in swarm robotics, exogenous control inputs are used to steer the swarm to desired configurations \cite{peyer2013bio,shahrokhi2017steering}. On the flip side, landmarks in the field of view of a collective may serve as tracer particles' constraints to guide the motion of the ensemble traversing a known terrain. Indeed, the use of tracers in medical imaging should not be overlooked \cite{jung2020whole,schwenck2023advances}.
Lastly, tracer particles can also be used in fluids to reveal the internal dynamics of the flow.

The present work aims to develop this circle of ideas in the most basic setting of linear flows and Gaussian distributions. In this, we consider
linear time-varying feedback control laws. Control costs penalize both the kinetic energy of the ensemble as well as the control gain. The former is physically motivated and underlies the theory of $L_2$-optimal mass transport \cite{benamou2000computational,villani2003topics,chen2016optimal,peyre2019computational,takatsu2011wasserstein,modin2016geometry}. The latter quantifies the complexity of control implementation as was proposed and eloquently explained by R.W.\ Brockett \cite{brockett1997minimum}.

The average kinetic energy has been a typical cost to transportation problems due to its convexity. 
Notably, the interpolation of measures given in \cite{mccann1997convexity}, and known as McCann's interpolation, coincides with the solution to the $L_2$-optimal transport problem. Moreover, in the context of control systems, \cite{chen2016relation} and \cite{chen2018optimal} considered minimizing the control energy to steer an ensemble that obeys linear dynamics between the endpoint distributions, thereby solving an $L_2$-optimal transport problem with prior dynamics. Further, \cite{chen2018state} extended the problem to tracking a sequence of distributions based on output measurements, instead of assuming full-state knowledge. Here, we also refer to the review article \cite{haasler2021control} and the references within.

 In the present work, given the data on the collective and tracers, we shall be concerned with identifying a suitable {\em pair} of a feedback gain and a state transition matrix that reconciles with the data. While seeking feedback gain is ubiquitous in control problems, identifying associated state transitions is novel and, in fact, essential in our exposition. In that, the purpose of obtaining the state-transitions is to recover the internal degrees of freedom which can not be inferred from solving the $L_2$-optimal transport problem; under the assumptions of linear flows and Gaussian distributions, the geometric study \cite{modin2016geometry} (see also \cite{takatsu2010wasserstein} and \cite{takatsu2011wasserstein}) explains that the $L_2$-optimal transport is a geodesic problem on the general linear group. The geodesics constituting the solution to optimal transport solely utilize the horizontal distribution, which is identified with gradient vector fields. Therefore, the problem is reduced to a geodesic problem on the space of Gaussian densities, where geodesics remarkably coincide with those of McCann's interpolation. This is to say, solving the $L_2$-optimal transport problem can only provide information about the underlying flow based on the symmetric part of the state transitions. As such, we are motivated here to obtain state transitions in full, utilizing tracer-based knowledge.

In the case at hand, when the feedback gain and the state transitions are decision variables, the dynamics turn out to be bilinear. Due to this fact, controllability issues are subtle and often overlooked.
For instance, while it was widely accepted that, in this basic setting, the Liouville equation is controllable under standard conditions \cite{brockett2007optimal}, a proof of this fact was not available until recently \cite{chen2015optimal} (see also \cite{abdelgalil2025collective}).
At the heart of such a technical issue is the fact that solutions to optimal control problems for linear systems may not always be expressible in linear feedback form due to topological obstructions \cite[Section IV]{abdelgalil2025collective}. Thus, besides raising attention to this class of problems where tracer particles may inform control and inference, it is worthwhile to shed light on the delicacy in establishing the existence of solutions to such problems, possibly under suitable regularization of the cost.

Below, we introduce notation and discuss the general setting in Section \ref{sec:setting}. In Section \ref{sec:problem1}, we introduce and solve the first of our problems: the problem of collective steering with the information on the running distribution of the collective in conjunction with the endpoint positions of tracers. In Section \ref{sec:problem 2}, we formulate and solve a dual problem involving information on the collective's endpoint distributions and trajectories of the tracers. Before concluding, we present an academic example for each problem in Section \ref{sec:examples}.


\section{Problem setting}\label{sec:setting}
Throughout, we consider a collection of particles in transport over the time window $t \in [0,1]$ that are subject to the linear dynamics:
\begin{align}
    \dot{X}_t &= u_t(X_t)  = K_t X_t \label{eq:lineardyn}.
\end{align}
We assume that $X_0 \sim \mathcal N_n (0,\Sigma_0)$ with $\Sigma_0\in \mathrm{Sym}^+(n)$, where $\mathcal N_n (0,\Sigma_0)$ denotes the $n$-dimensional Gaussian distribution with zero mean and covariance $\Sigma_0$, and
\begin{align*}
    {\rm Sym}^+(n) := \{ \Sigma \in \mathbb R^{n \times n} ~|~ \Sigma \succ0, \Sigma = \Sigma^\top\},
\end{align*}
i.e., the space of  $n \times n$ positive-definite symmetric matrices.
If ${\rm GL}(n)$ denotes the real general linear group of degree $n$, i.e., the multiplicative group of $n\times n$ invertible matrices, then the flow of \eqref{eq:lineardyn} takes the form
\begin{align}\label{eq:linearflow}
    X_t=\Phi_t X_0,
\end{align}
where the state-transition matrix $\Phi_t \in {\rm GL}(n)$ satisfies
\begin{align}\label{eq:state-transition}
    \dot \Phi_t = K_t \Phi_t,  \quad \mbox{ and } \Phi_0 = I,
\end{align}
for $I$ denoting the identity matrix. As is well-known, the space of Gaussian distributions is invariant under the action of linear flows, i.e., flows of the form \eqref{eq:linearflow}, via the pushforward of measures. Then, with direct computation, one can verify that $X_t \sim \mathcal N_n(0, \Phi_t\Sigma_0\Phi_t^\top)$. 

As outlined in the introduction, the data of the problems we consider in this paper belong to one of two types: i) marginal distributional constraints and ii) trajectories/displacement constraints of \emph{tracer} particles. Expressed in terms of the state transition matrix $\Phi_t$ defining the flow of \eqref{eq:lineardyn}, the first of these takes the form
\begin{align}\label{eq:covariance-constraint}
    \Sigma_t = \Phi_t \Sigma_0 \Phi_t^\top, \quad \mbox{ with } t\in\mathcal{T}_1\subseteq[0,1],
\end{align}
where $\Sigma_t\in \mathrm{Sym}^+(n)$. 
We will only be interested in the cases where $\mathcal{T}_1=\{0,1\}$ or $\mathcal{T}_1=[0,1]$. In the latter case, we shall always assume that the curve $(\Sigma_t)_{t\in[0,1]}$ is continuously differentiable. 

The second type of constraints, ii), can be succinctly explained as follows. Let $y_t^{(i)}$ be the position of the $i$-th tracer particle at the time instant $t\in\mathcal{T}_2\subseteq[0,1]$ for $i \in \{1, \ldots m\}$ and let
\begin{align*}
    Y_t = \begin{bmatrix}
        y_t^{(1)} & \ldots & y_t^{(m)} 
    \end{bmatrix} \in \mathbb R^{n \times m}
\end{align*}
be the matrix comprising instantaneous information on the tracer positions. Then, the constraint ii) is of the form
\begin{align} \label{eq:Yt-constraint}
    Y_t=\Phi_t Y_0, \quad \mbox{ with } t\in\mathcal{T}_2\subseteq[0,1].
\end{align}
Once again, we will only be interested in the cases where $\mathcal{T}_2=\{0,1\}$ or $\mathcal{T}_2=[0,1]$, and in the latter case, we shall always assume that the curve $(Y_t)_{t \in[0,1]}$ is continuously differentiable and of full column rank.

We note that in the limiting case $m=n$ and by \eqref{eq:Yt-constraint}, the matrix $\Phi_t = Y_t^{-1} Y_0$, i.e., the state transitions are fully determined from information on the tracer trajectories. In what follows, we shall assume that $m<n$, and we will be concerned with determining the state transitions that reconcile with given information on the flow via optimization problems. Specifically, we will be concerned with minimizing a cost functional over a to-be-characterized admissible set of pairs $(K_t,\Phi_t)$. We shall refer to the global minimum in the admissible set as the {\em absolute} minimum, following the terminology of \cite{cesari1983optimization}.

Before concluding this section, we introduce the cost functionals we will be dealing with. The first control cost we consider is the integral of the (expected) kinetic energy:
\begin{align}\label{eq:cost-kinetic-energy-X}
    J_{\rm KE} := \frac{1}{2}\int_0^1 \mathbb{E}[\|\dot{X}_t\|^2]\,\mathrm{d}t,
\end{align}
where $\mathbb{E}[\cdot]$ denotes the expectation operator (recall that $X_0\sim\mathcal{N}_n(0,\Sigma_0)$, i.e., $X_0$ is a Gaussian random variable). The functional in \eqref{eq:cost-kinetic-energy-X} is a typical cost in optimal mass transport and quantifies the \emph{control energy}, see, e.g., \cite{chen2016optimal} and \cite{modin2016geometry}. Substituting \eqref{eq:lineardyn} and \eqref{eq:linearflow} into \eqref{eq:cost-kinetic-energy-X} while utilizing the properties of the expectation operator and the fact that $\mathbb{E}[X_0X_0^\top]=\Sigma_0$, we obtain that  \eqref{eq:cost-kinetic-energy-X} can be rewritten as
\begin{align}\label{eq:cost-kinetic-energy-Phi}
  J_{\rm KE} = \frac{1}{2}  \int_0^1 \tr(K_t\Phi_t\Sigma_0\Phi_t^\top K_t^\top)\,\mathrm{d}t  .
\end{align}
Notice that, in terms of $K_t$ and $\Phi_t$, the cost functional in \eqref{eq:cost-kinetic-energy-Phi} is not jointly convex, and so establishing the existence of solutions of optimal control problems with the cost in \eqref{eq:cost-kinetic-energy-Phi} is non-trivial.
Indeed, for some terminal specifications, minimizing the cost in \eqref{eq:cost-kinetic-energy-Phi} leads to a singular state transition $\Phi_t$ and, consequently, a non-integrable $K_t$ \cite[Example 1]{abdelgalil2025collective}.
The second cost functional we consider is expressed as
\begin{align}\label{eq:attention-cost-X}
  J_{\rm A} :=  \frac{1}{2} \int_0^1 \mathbb{E}[\|\nabla u(X_t)\|^2]\,\mathrm{d}t,
\end{align}
which is a modification of the attention functional introduced by R.W. Brockett in \cite{brockett1997minimum} and represents a \emph{weighted complexity} of implementing the control law $u_t$ in \eqref{eq:lineardyn} (the weighting in the form of the expectation is introduced to ensure a finite integral). From \eqref{eq:lineardyn} and \eqref{eq:linearflow}, we see that \eqref{eq:attention-cost-X} is equivalent to
\begin{align}\label{eq:attention-cost-Phi}
   J_{\rm A} = \frac{1}{2}  \int_0^1 \tr(K_t K_t^\top)\,\mathrm{d}t.
\end{align}
Note that, unlike the functional in \eqref{eq:cost-kinetic-energy-Phi}, the cost functional in \eqref{eq:attention-cost-Phi} is convex in $K_t$ and is independent of $\Phi_t$. It turns out that utilizing the functional in \eqref{eq:attention-cost-Phi} as a \emph{regularization} term for the cost functional in \eqref{eq:cost-kinetic-energy-Phi} is instrumental in establishing the existence of solutions to one of the problems we consider.

\section{Main Contributions}
\subsection{Internal Dynamics from Tracer Displacements}\label{sec:problem1}
Here, we tackle the first of the problems we propose in this paper. Throughout this subsection, we 
assume knowledge about the ensemble of particles in the form of the marginal distributional constraints \eqref{eq:covariance-constraint} with $\mathcal{T}_1=[0,1]$ and the endpoint conditions on the tracers \eqref{eq:Yt-constraint} with $\mathcal{T}_2=\{0,1\}$. Our task is to identify the dynamics in \eqref{eq:state-transition}, affected by a suitable control vector field $u_t$  as in \eqref{eq:lineardyn}, that realizes the given data about the flow. We take the functional in \eqref{eq:cost-kinetic-energy-Phi} as our control cost. The problem we investigate can then be formulated as follows.
\begin{problem}\label{problem:knowledge-on-ens}
    Given $(\Sigma_t)_{t \in \mathcal{T}_1} \in {\rm Sym}^+(n)$ with $\mathcal{T}_1=[0,1]$ and $(Y_t)_{t  \in \mathcal{T}_2} \in \mathbb R^{n \times m}$ with $\mathcal{T}_2=\{0,1\}$, find
    \begin{align*}
         (K_t^\star, \Phi_t^\star) := \arg \min_{(K_t, \Phi_t)} J_{\rm KE}
    \end{align*}
   subject to the constraints \eqref{eq:state-transition}, \eqref{eq:covariance-constraint}, and \eqref{eq:Yt-constraint}.
\end{problem}

Let us define what shall be viewed as an admissible solution to the previous problem. We say a curve $(K_t,\Phi_t)_{t\in[0,1]}$ is \emph{admissible} for Problem~\ref{problem:knowledge-on-ens} if $(K_t)_{t \in[0,1]}$ is square integrable and $(\Phi_t)_{t \in[0,1]}$ is absolutely continuous, and are such that the differential constraint \eqref{eq:state-transition} is satisfied (whenever $\dot{\Phi}_t$ exists) as well as constraints \eqref{eq:covariance-constraint} and \eqref{eq:Yt-constraint}, and the value of the functional \eqref{eq:cost-kinetic-energy-Phi} is finite. The following proposition provides a more explicit characterization of the class of admissible curves.

\begin{prop}\label{prop:admissible-curves-problem-1}
    A curve $(K_t,\Phi_t)_{t\in[0,1]}$ is an admissible curve for Problem~\ref{problem:knowledge-on-ens} if and only if the following four conditions are satisfied. First,
    \begin{align}\label{eq:push-lin}
        \Phi_1\Sigma_0\Phi_1^\top=\Sigma_1,
    \end{align}
    second, the constraint \eqref{eq:Yt-constraint} is satisfied, third, there exists an almost everywhere unique square-integrable curve $(\Omega_t)_{t\in[0,1]}\in\mathbb{R}^{n\times n}$ such that
    \begin{align}\label{eq:solution-form-problem-1}
        K_t=\frac{1}{2}\dot{\Sigma}_t\Sigma_t^{-1}+ \Omega_t\Sigma_t^{-1},\quad \mbox{ with } \Omega_t^\top=-\Omega_t,
    \end{align}
    for almost all $t\in[0,1]$, and fourth, the differential constraint \eqref{eq:state-transition} is satisfied (whenever $\dot{\Phi}_t$ exists).
\end{prop}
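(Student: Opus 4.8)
The plan is to prove the two implications of the ``if and only if'' separately, with the algebraic heart being the passage between the running covariance constraint \eqref{eq:covariance-constraint} on all of $[0,1]$ and the structural form \eqref{eq:solution-form-problem-1} of the gain. I would first record the differential identity that drives everything: writing $P_t := \Phi_t\Sigma_0\Phi_t^\top$, wherever $\dot\Phi_t$ exists the differential constraint \eqref{eq:state-transition} gives $\dot P_t = K_t P_t + P_t K_t^\top$, a Lyapunov-type equation. Throughout I would use that $\Sigma_t\in\mathrm{Sym}^+(n)$ is continuous (so $\Sigma_t^{-1}$ is bounded on the compact interval) and, by hypothesis, continuously differentiable (so $\dot\Sigma_t$ is bounded).

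For the forward direction (admissible $\Rightarrow$ the four conditions), conditions two and four are literally part of the definition of admissibility, and condition one is simply \eqref{eq:covariance-constraint} evaluated at $t=1$. For condition three I would use that admissibility forces $P_t=\Sigma_t$ for all $t$; differentiating this equality and substituting the identity above yields $\dot\Sigma_t = K_t\Sigma_t + \Sigma_t K_t^\top$ for a.e.\ $t$. I would then set $\Omega_t := K_t\Sigma_t - \tfrac12\dot\Sigma_t$ and solve for $K_t$ to produce \eqref{eq:solution-form-problem-1}. A one-line transpose computation, using symmetry of $\Sigma_t$ and $\dot\Sigma_t$, gives $\Omega_t+\Omega_t^\top = K_t\Sigma_t + \Sigma_t K_t^\top - \dot\Sigma_t = 0$, so $\Omega_t$ is skew-symmetric; its square-integrability follows from square-integrability of $K_t$ and boundedness of $\Sigma_t,\dot\Sigma_t$, and its a.e.\ uniqueness is immediate since it is explicitly determined by $K_t$.

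For the reverse direction (four conditions $\Rightarrow$ admissible), the only nontrivial step is to upgrade the differential information in \eqref{eq:solution-form-problem-1} to the pointwise-in-$t$ covariance identity \eqref{eq:covariance-constraint}, together with finiteness of the cost. Substituting \eqref{eq:solution-form-problem-1} back and invoking skew-symmetry of $\Omega_t$ recovers $\dot\Sigma_t = K_t\Sigma_t + \Sigma_t K_t^\top$, so both $t\mapsto\Sigma_t$ and $t\mapsto P_t$ solve the same linear matrix ODE $\dot Z_t = K_t Z_t + Z_t K_t^\top$. They share the initial value $Z_0=\Sigma_0$, since $\Phi_0=I$ forces $P_0=\Sigma_0$ and \eqref{eq:covariance-constraint} at $t=0$ is automatic. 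By uniqueness of absolutely continuous solutions of a linear ODE with integrable coefficients (Carath\'eodory--Gr\"onwall, valid because $K_t\in L^2\subset L^1$), I conclude $P_t=\Sigma_t$ on all of $[0,1]$, which is exactly \eqref{eq:covariance-constraint}. Finiteness of $J_{\rm KE}$ then follows by rewriting the integrand as $\tr(K_t\Sigma_t K_t^\top)$ and bounding it by $\|\Sigma_t\|\,\|K_t\|_F^2$, which is integrable since $\Sigma_t$ is bounded and $K_t$ is square-integrable.

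I expect the reverse direction to be the main obstacle: the form \eqref{eq:solution-form-problem-1} only encodes the Lyapunov equation that $\Sigma_t$ satisfies, so promoting this local, differential statement to the global covariance identity \eqref{eq:covariance-constraint} genuinely requires the ODE-uniqueness argument above together with the boundary normalization $\Phi_0=I$. The remaining difficulty is purely in the regularity bookkeeping --- tracking the almost-everywhere qualifiers and using that $\Sigma_t,\Sigma_t^{-1},\dot\Sigma_t$ are bounded while $K_t$ is only square-integrable --- but this is routine once the above skeleton is in place.
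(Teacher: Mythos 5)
Your proposal is correct and follows essentially the same route as the paper: differentiate the running covariance constraint to obtain the Lyapunov equation $\dot{\Sigma}_t = K_t\Sigma_t + \Sigma_t K_t^\top$, solve for $K_t$ via the skew-symmetric residual $\Omega_t = K_t\Sigma_t - \tfrac{1}{2}\dot{\Sigma}_t$, and integrate back for the converse. Your explicit ODE-uniqueness argument (both $\Sigma_t$ and $\Phi_t\Sigma_0\Phi_t^\top$ solving the same linear matrix ODE from the same initial condition) is simply a careful rendering of what the paper compresses into ``follows by direct integration.''
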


\begin{proof}
    Let $(K_t,\Phi_t)_{t\in[0,1]}$ be an admissible curve for Problem~\ref{problem:knowledge-on-ens}. Since $\mathcal{T}_1=[0,1]$ and $(\Sigma_t)_{t\in\mathcal{T}_1}$ is continuously differentiable by assumption, direct differentiation of \eqref{eq:covariance-constraint} followed by substitution of \eqref{eq:state-transition} shows that $(K_t)_{t \in[0,1]}$ must satisfy the Lyapunov equation:
    \begin{align}\label{eq:lyap}
        \dot{\Sigma}_t = K_t \Sigma_t + \Sigma_t K_t^\top,
    \end{align}
    for almost all $t\in[0,1]$.  On the other hand, since $\Sigma_t\in\mathrm{Sym}^+(n)$, it follows from basic linear algebra that the constraint \eqref{eq:lyap} is satisfied for all $t\in\mathcal{T}_1=[0,1]$ if and only if $K_t$ is given by \eqref{eq:solution-form-problem-1}, where the square integrable-curve $(\Omega_t)_{t\in[0,1]}\in\mathbb{R}^{n\times n}$ is almost everywhere uniquely defined. Finally, since $1\in\mathcal{T}_1=[0,1]$, the curve $(\Phi_t)_{t \in[0,1]}$ must be such that $\Phi_1$ satisfies \eqref{eq:push-lin}. The converse implication follows by direct integration of the differential constraint \eqref{eq:state-transition} with $(K_t)_{t \in[0,1]}$ defined by \eqref{eq:solution-form-problem-1}.
\end{proof}

Next, we observe that if $(K_t,\Phi_t)_{t\in[0,1]}$ is any admissible curve for Problem~\ref{problem:knowledge-on-ens}, the functional in \eqref{eq:cost-kinetic-energy-Phi} is equivalent to
\begin{align*}
  \frac{1}{2}  \int_0^1 \tr(K_t \Sigma_t K_t^\top)\,\mathrm{d}t,
\end{align*}
where we have substituted the constraint \eqref{eq:covariance-constraint} into \eqref{eq:cost-kinetic-energy-Phi}. The advantage of the latter representation of the functional is that it is convex in the free parameter $\Omega_t$ (after another substitution from \eqref{eq:solution-form-problem-1}) and is independent of $\Phi_t$. 
The preceding discussion suggests the reformulation of Problem~\ref{problem:knowledge-on-ens} as follows.
\begin{taggedproblem}{\ref{problem:knowledge-on-ens}$^\prime$}\label{problem:knowledge-on-ens-reformulation}
    Given $(\Sigma_t)_{t \in \mathcal{T}_1} \in {\rm Sym}^+(n)$ with $\mathcal{T}_1=[0,1]$ and $(Y_t)_{t  \in \mathcal{T}_2} \in \mathbb R^{n \times m}$ with $\mathcal{T}_2=\{0,1\}$, find 
    \begin{align*}
     (K_t^\star, \Phi_t^\star) := \arg \min_{(K_t, \Phi_t)} \frac{1}{2} \int_0^1 \tr(K_t \Sigma_t K_t^\top)\,\mathrm{d}t 
    \end{align*}
    subject to the constraints \eqref{eq:state-transition}, \eqref{eq:Yt-constraint}, \eqref{eq:push-lin}, and \eqref{eq:solution-form-problem-1}.
\end{taggedproblem}
The next proposition, the proof of which is a direct consequence of Proposition~\ref{prop:admissible-curves-problem-1} and the discussion preceding the statement of Problem~\ref{problem:knowledge-on-ens-reformulation}, formulates the equivalence between Problem~\ref{problem:knowledge-on-ens} and Problem~\ref{problem:knowledge-on-ens-reformulation}.   
\begin{prop}
    An admissible curve $(K_t,\Phi_t)_{t\in[0,1]}$ solves Problem~\ref{problem:knowledge-on-ens} if and only if it solves Problem~\ref{problem:knowledge-on-ens-reformulation}.
\end{prop}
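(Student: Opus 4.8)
The plan is to show that Problem~\ref{problem:knowledge-on-ens} and Problem~\ref{problem:knowledge-on-ens-reformulation} are, in fact, the same optimization problem: they are posed over a common feasible set, and their objective functionals coincide on that set. Once both facts are in hand, the two problems have identical $\arg\min$ sets, and the claimed equivalence follows at once.

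First I would verify that the two feasible sets coincide. The feasible set of Problem~\ref{problem:knowledge-on-ens} is, by definition, the collection of admissible curves $(K_t,\Phi_t)_{t\in[0,1]}$, while the feasible set of Problem~\ref{problem:knowledge-on-ens-reformulation} is the collection of curves satisfying the constraints \eqref{eq:state-transition}, \eqref{eq:Yt-constraint}, \eqref{eq:push-lin}, and \eqref{eq:solution-form-problem-1}. But Proposition~\ref{prop:admissible-curves-problem-1} asserts precisely that a curve is admissible for Problem~\ref{problem:knowledge-on-ens} if and only if it satisfies exactly these four conditions. Hence the two feasible sets are identical, with no further verification required beyond invoking the proposition.

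Second, I would show that the two objective functionals agree on this common feasible set. On any admissible curve the covariance constraint \eqref{eq:covariance-constraint} holds for every $t\in[0,1]$ --- this is guaranteed by Proposition~\ref{prop:admissible-curves-problem-1}, whose converse direction reconstructs \eqref{eq:covariance-constraint} over the whole interval by integrating \eqref{eq:state-transition} against the form \eqref{eq:solution-form-problem-1}. Substituting $\Phi_t\Sigma_0\Phi_t^\top=\Sigma_t$ into the integrand of $J_{\rm KE}$ in \eqref{eq:cost-kinetic-energy-Phi} turns $\tr(K_t\Phi_t\Sigma_0\Phi_t^\top K_t^\top)$ into $\tr(K_t\Sigma_t K_t^\top)$ pointwise in $t$, so that the two integrals agree value-for-value on every feasible curve. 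This is exactly the substitution carried out in the discussion preceding the statement of Problem~\ref{problem:knowledge-on-ens-reformulation}.

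Finally, since the two problems minimize functionals that take the same value at each point of a common feasible set, their sets of minimizers coincide, which is the asserted biconditional. The only point that warrants care is the observation that the time-pointwise covariance constraint over all of $[0,1]$ --- rather than merely the terminal condition \eqref{eq:push-lin} appearing explicitly among the constraints of Problem~\ref{problem:knowledge-on-ens-reformulation} --- is genuinely available on the feasible set, since it is this full-time identity that licenses the substitution in the objective. This, however, is supplied directly by Proposition~\ref{prop:admissible-curves-problem-1}, and so poses no real difficulty; the entire argument is a matter of assembling facts already established.
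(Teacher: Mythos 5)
Your proposal is correct and follows exactly the route the paper intends: the paper states that the proposition is ``a direct consequence of Proposition~\ref{prop:admissible-curves-problem-1} and the discussion preceding Problem~\ref{problem:knowledge-on-ens-reformulation},'' and your argument simply makes explicit the two ingredients involved --- identical feasible sets via Proposition~\ref{prop:admissible-curves-problem-1}, and pointwise-equal objectives via the substitution $\Phi_t\Sigma_0\Phi_t^\top=\Sigma_t$, which is licensed on the common feasible set by the converse direction of that proposition.
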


Before we proceed with deriving the necessary conditions of optimality, we state in passing a theorem on the existence of solutions to Problem~\ref{problem:knowledge-on-ens-reformulation} and, by extension, Problem~\ref{problem:knowledge-on-ens}.

\begin{thm}\label{thm:existence-of-solutions-problem-1}
    Suppose at least one admissible curve exists for Problem~\ref{problem:knowledge-on-ens}. Then, there exists an absolute minimizer $(K_t^\star,\Phi_t^\star)_{t\in[0,1]}$ for Problem~\ref{problem:knowledge-on-ens}.
\end{thm}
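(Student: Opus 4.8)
The plan is to establish existence by the direct method of the calculus of variations, working with the convex reformulation in Problem~\ref{problem:knowledge-on-ens-reformulation}; by the equivalence of the two problems it suffices to produce a minimizer there. Taking the square-integrable gain $(K_t)_{t\in[0,1]}$ as the single decision variable, the admissibility conditions of Proposition~\ref{prop:admissible-curves-problem-1} amount to: (i) the affine pointwise constraint that $K_t\Sigma_t-\tfrac12\dot{\Sigma}_t$ be skew-symmetric, which is exactly the content of \eqref{eq:solution-form-problem-1} and is equivalent to the Lyapunov equation \eqref{eq:lyap}; and (ii) the terminal tracer constraint $\Phi_1 Y_0=Y_1$ imposed through the flow $\dot{\Phi}_t=K_t\Phi_t$, $\Phi_0=I$. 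The endpoint covariance condition \eqref{eq:push-lin} is then automatic: since $\Phi_0=I$ and $K_t$ solves \eqref{eq:lyap}, both $t\mapsto\Phi_t\Sigma_0\Phi_t^\top$ and $t\mapsto\Sigma_t$ solve the same linear matrix ODE with the same initial datum, hence coincide on $[0,1]$.

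First I would establish coercivity and extract a weak limit. Because $(\Sigma_t)_{t\in[0,1]}$ is continuous and positive definite on the compact interval $[0,1]$, there is $c>0$ with $\Sigma_t\succeq cI$ for all $t$, so $\tr(K_t\Sigma_tK_t^\top)\ge c\,\|K_t\|_F^2$ and the cost dominates $\tfrac{c}{2}\|K\|_{L^2}^2$. Thus a minimizing sequence $(K^{(k)})$ is bounded in $L^2([0,1];\mathbb{R}^{n\times n})$, and along a subsequence $K^{(k)}\rightharpoonup K^\star$ weakly in $L^2$. The functional $K\mapsto\tfrac12\int_0^1\tr(K_t\Sigma_tK_t^\top)\,\mathrm{d}t=\tfrac12\|K\Sigma^{1/2}\|_{L^2}^2$ is a convex quadratic form (as $\Sigma_t\succ0$), hence strongly continuous and weakly lower semicontinuous, giving $J(K^\star)\le\liminf_k J(K^{(k)})$. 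The pointwise constraint (i) defines a closed affine subspace of $L^2$, hence a weakly closed one, so $K^\star$ again satisfies \eqref{eq:solution-form-problem-1} for some almost everywhere unique skew-symmetric $(\Omega_t^\star)$.

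The crux is to pass to the limit in the terminal constraint (ii), and this is where I expect the main obstacle. The solution operator $K\mapsto\Phi_1$ of the bilinear equation $\dot{\Phi}_t=K_t\Phi_t$ is not weakly continuous in general, so weak convergence of $K^{(k)}$ alone does not control $\Phi^{(k)}_1$. The observation that rescues the argument is the a priori identity $\Phi^{(k)}_t\Sigma_0(\Phi^{(k)}_t)^\top=\Sigma_t$, valid for all $t$ along the whole minimizing sequence by the computation above; it forces $\{\Phi^{(k)}_t\}$ to be uniformly bounded in $t$ and $k$. Together with the $L^2$-bound on $K^{(k)}$, this makes $\dot{\Phi}^{(k)}=K^{(k)}\Phi^{(k)}$ bounded in $L^2$, whence $\Phi^{(k)}$ is uniformly H\"older-$\tfrac12$ equicontinuous; by the Arzel\`a--Ascoli theorem a further subsequence converges uniformly, $\Phi^{(k)}\to\Phi^\star$. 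Writing the flow in integral form $\Phi^{(k)}_t=I+\int_0^t K^{(k)}_s\Phi^{(k)}_s\,\mathrm{d}s$ and using that the product of a weakly convergent sequence ($K^{(k)}\rightharpoonup K^\star$) and a strongly convergent sequence ($\Phi^{(k)}\to\Phi^\star$) passes to the product of the limits, I would conclude $\Phi^\star_t=I+\int_0^t K^\star_s\Phi^\star_s\,\mathrm{d}s$, i.e.\ $\Phi^\star$ is the absolutely continuous flow generated by $K^\star$. Uniform convergence then yields $\Phi^\star_1 Y_0=\lim_k \Phi^{(k)}_1 Y_0=Y_1$, so the tracer constraint survives in the limit.

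Combining the three steps, $(K^\star,\Phi^\star)$ is admissible and $J(K^\star)\le\liminf_k J(K^{(k)})=\inf J$, so it is an absolute minimizer of Problem~\ref{problem:knowledge-on-ens-reformulation}, and by the equivalence of Problem~\ref{problem:knowledge-on-ens} and Problem~\ref{problem:knowledge-on-ens-reformulation} an absolute minimizer of Problem~\ref{problem:knowledge-on-ens}. The only genuine difficulty is the loss of weak continuity of the flow map noted above; it is worth emphasizing that its resolution relies essentially on the running covariance data ($\mathcal{T}_1=[0,1]$), which is precisely what pins the curve $\Phi_t$ into a compact set and upgrades weak convergence of the gains to uniform convergence of the flows.
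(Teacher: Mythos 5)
Your argument is correct, and it is worth noting that it is genuinely more explicit than what the paper does: the paper omits the proof entirely and simply asserts that it ``consists of verifying that the conditions of [Cesari, Theorem 11.4.vi] hold,'' i.e.\ it appeals to a general Filippov--Cesari existence theorem (which packages lower closure, growth conditions, and convexity of the relevant orientor fields), whereas you run the direct method by hand. The two routes rest on the same machinery, but your version makes visible exactly which structural features of Problem~\ref{problem:knowledge-on-ens} are doing the work: coercivity of $\tr(K_t\Sigma_t K_t^\top)$ from $\Sigma_t\succeq cI$ on the compact time interval, weak closedness of the affine Lyapunov constraint, and --- the genuinely delicate point --- the fact that the running constraint $\Phi_t\Sigma_0\Phi_t^\top=\Sigma_t$ confines the flows to a compact set, so that Arzel\`a--Ascoli upgrades weak convergence of the gains to uniform convergence of the flows and the weak--strong pairing in $\Phi_t=I+\int_0^t K_s\Phi_s\,\mathrm{d}s$ identifies the limit flow. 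This is precisely why the paper can state an unregularized existence result for Problem~\ref{problem:knowledge-on-ens} (where $\mathcal{T}_1=[0,1]$) but must add the attention regularizer $\varepsilon J_{\rm A}$ in Problem~\ref{problem:dual}, where no such a priori bound on $\Phi_t$ is available; your closing remark captures this correctly. Two minor points you may wish to make explicit: the limit $\Phi^\star_t$ lies in $\mathrm{GL}(n)$ because $\Phi^\star_t\Sigma_0(\Phi^\star_t)^\top=\Sigma_t\succ0$, and the weak--strong product argument should be phrased as convergence of $\int_0^1\mathds{1}_{[0,t]}(s)K^{(k)}_s\Phi^{(k)}_s\,\mathrm{d}s$ using $K^{(k)}\rightharpoonup K^\star$ in $L^2$ against $\mathds{1}_{[0,t]}\Phi^{(k)}\to\mathds{1}_{[0,t]}\Phi^\star$ strongly in $L^2$.
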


The theorem implies that Problem~\ref{problem:knowledge-on-ens} has an absolute minimizer, which, however, may not be unique. The proof, due to its technical nature, is omitted. Briefly, it consists of verifying that the conditions of \cite[Theorem 11.4.vi]{cesari1983optimization} hold.
%
%
Thus, in light of Theorem~\ref{thm:existence-of-solutions-problem-1}, we can now meaningfully seek a minimizer by solving the necessary conditions of optimality obtained from the classical theory of optimal control \cite{kirk2004optimal} as formulated next.

\begin{prop} \label{prop:proptracer}
    Let $(K_t^\star,\Phi_t^\star)_{t\in[0,1]}$ be an absolute minimizer of Problem~\ref{problem:knowledge-on-ens}, or equivalently Problem~\ref{problem:knowledge-on-ens-reformulation}. Then, 
    \begin{subequations}
    \begin{align}\label{eq:neccPhi}
        \dot{\Phi}_t^\star &=  K_t^\star \Phi_t^\star, 
        \end{align}
        with $\Phi_0^\star = I$, $K_t^\star$ coincides with \eqref{eq:solution-form-problem-1} and $\Omega_t$ satisfies
        \begin{align}
        \begin{split}
           \Omega_t \Sigma_t+  \Sigma_t \Omega_t &=   \ \big( \Phi_t^\star (P_t^\star)^\top + \frac{1}{2}\dot{\Sigma}_t\big)\Sigma_t \\
        &   -  \Sigma_t\big( P_t^\star (\Phi_t^\star)^\top + \frac{1}{2} \dot{\Sigma}_t \big), \end{split}  \label{eq:lyap-omega} \\
     \hspace{-30pt}   \text{where } \hspace{20pt}   \dot{P_t}^\star& =  -({K_t}^\star)^\top P_t^\star.         \label{eq:neccP}             
        \end{align}
        Moreover, if $\mathcal S\subset\mathrm{GL}(n)$ is the surface defined by
        \begin{align} \label{eq:surfnecc}
            \mathcal{S}:=\{\Phi\in\mathrm{GL}(n)~|~\Phi\Sigma_0\Phi^\top=\Sigma_1,\,\Phi Y_0= Y_1\},
            \end{align}
      \end{subequations}
        then \eqref{eq:neccPhi} and \eqref{eq:neccP} are subject to the mixed boundary conditions $\Phi_1^\star  \in \mathcal S$ and ${P_1} \perp T_{\Phi_1^\star}\mathcal{S}$, respectively.        
\end{prop}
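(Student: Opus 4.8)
The plan is to read Problem~\ref{problem:knowledge-on-ens-reformulation} as a fixed-horizon optimal control problem whose state is $\Phi_t$ and whose control is the skew-symmetric matrix $\Omega_t$ entering $K_t$ through \eqref{eq:solution-form-problem-1}. By Proposition~\ref{prop:admissible-curves-problem-1} the running covariance constraint has already been absorbed into the parametrization of $K_t$, so the only surviving constraints are the differential constraint \eqref{eq:state-transition}, the fixed left endpoint $\Phi_0=I$, and a terminal requirement on $\Phi_1$; crucially, \eqref{eq:push-lin} together with \eqref{eq:Yt-constraint} at $t=1$ are exactly the two defining equations of the surface $\mathcal{S}$ in \eqref{eq:surfnecc}, so the terminal requirement is precisely $\Phi_1\in\mathcal{S}$. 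Since the running cost $\tfrac12\tr(K_t\Sigma_t K_t^\top)$ is independent of $\Phi_t$ and $\Omega_t\mapsto K_t$ is affine, I would introduce a matrix costate $P_t$ and the Hamiltonian $H=\tfrac12\tr(K_t\Sigma_t K_t^\top)+\tr(P_t^\top K_t\Phi_t)$, and then read off the canonical equations of \cite{kirk2004optimal}: $\dot\Phi_t=\partial H/\partial P_t=K_t\Phi_t$ reproduces \eqref{eq:neccPhi}, while $\dot P_t=-\partial H/\partial\Phi_t=-K_t^\top P_t$ reproduces \eqref{eq:neccP}, using $\partial_\Phi\tr(P^\top K\Phi)=K^\top P$.

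The substantive step is the stationarity condition in the control $\Omega_t$. Differentiating $H$ through $K_t=\tfrac12\dot\Sigma_t\Sigma_t^{-1}+\Omega_t\Sigma_t^{-1}$ gives the directional derivative $\tr\big[(K_t^\top+\Sigma_t^{-1}\Phi_t P_t^\top)\,\delta\Omega\big]$. Because the admissible variations $\delta\Omega$ are exactly the skew-symmetric matrices, this must vanish for every skew $\delta\Omega$, which is equivalent to demanding that the gradient matrix $K_t^\top+\Sigma_t^{-1}\Phi_t P_t^\top$ be symmetric. Moreover, the $\Omega$-quadratic part of the cost is $\tfrac12\tr(\Omega_t\Sigma_t^{-1}\Omega_t^\top)$ with $\Sigma_t^{-1}\succ0$, so $H$ is strictly convex in $\Omega_t$ and this first-order condition is also sufficient for the pointwise minimum; the minimum principle is therefore captured exactly by the symmetry requirement. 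Clearing $\Sigma_t^{-1}$ by multiplying the symmetry identity on both sides by $\Sigma_t$, substituting the symmetric part $\tfrac12\dot\Sigma_t\Sigma_t^{-1}$ of $K_t$, and collecting terms yields precisely \eqref{eq:lyap-omega}. I would additionally note that its right-hand side is skew-symmetric (it has the form $C\Sigma_t-\Sigma_t C^\top$ with $C=\Phi_t P_t^\top+\tfrac12\dot\Sigma_t$) and that $\Omega\mapsto\Omega\Sigma_t+\Sigma_t\Omega$ is a bijection of the skew subspace when $\Sigma_t\succ0$, so \eqref{eq:lyap-omega} determines $\Omega_t$ uniquely and consistently with the skew constraint imposed in \eqref{eq:solution-form-problem-1}.

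For the boundary conditions, the left endpoint $\Phi_0^\star=I$ is imposed directly by \eqref{eq:state-transition}. At the right endpoint there is no terminal cost, so the transversality rule for a problem with the terminal state constrained to a manifold applies: together with primal feasibility $\Phi_1^\star\in\mathcal{S}$, the costate must lie in the normal space to $\mathcal{S}$, i.e.\ $P_1\perp T_{\Phi_1^\star}\mathcal{S}$. To justify this I would verify that $\mathcal{S}$ is an embedded submanifold of $\mathrm{GL}(n)$: the defining map $\Phi\mapsto(\Phi\Sigma_0\Phi^\top-\Sigma_1,\ \Phi Y_0-Y_1)$ has (under $m<n$, $\Sigma_0,\Sigma_1\succ0$) surjective differential at feasible points, so its tangent space is well defined and the multiplier/transversality condition of \cite{kirk2004optimal} specializes to exactly the stated orthogonality.

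The main obstacle I anticipate is twofold. First, organizing the matrix algebra in the stationarity step so that the raw symmetry condition collapses cleanly into the symmetric Lyapunov form \eqref{eq:lyap-omega}: the skew-symmetry of $\Omega_t$ and of the right-hand side, together with the symmetry of $\Sigma_t$ and $\dot\Sigma_t$, must be tracked carefully, and it is their interplay with $\Sigma_t\succ0$ that guarantees solvability. Second, legitimizing the transversality condition on the whole surface $\mathcal{S}$ rather than at a single target point; this rests on the regularity (submersion/constant-rank) of the constraint map defining $\mathcal{S}$, which is exactly what permits replacing a pointwise terminal condition by the orthogonality $P_1\perp T_{\Phi_1^\star}\mathcal{S}$.
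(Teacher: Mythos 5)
Your proposal is correct and follows essentially the same route as the paper: the same control Hamiltonian $\mathcal H=\tfrac12\tr(K_t\Sigma_t K_t^\top)+\tr(P_t^\top K_t\Phi_t)$, the same canonical equations yielding \eqref{eq:neccPhi} and \eqref{eq:neccP}, stationarity of $\mathcal H$ in the skew-symmetric control $\Omega_t$ yielding \eqref{eq:lyap-omega}, and the standard transversality argument for the terminal manifold $\mathcal S$. Your added observations (strict convexity of $\mathcal H$ in $\Omega_t$, skew-symmetry of the right-hand side of \eqref{eq:lyap-omega} and invertibility of $\Omega\mapsto\Omega\Sigma_t+\Sigma_t\Omega$ on the skew subspace, and the submersion property of the map defining $\mathcal S$) are correct refinements of details the paper leaves implicit.
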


\begin{proof}
   From Proposition~\ref{prop:admissible-curves-problem-1}, the control Hamiltonian corresponding to Problem~\ref{problem:knowledge-on-ens-reformulation} is
   \begin{align*}
       \mathcal H(K_t, \Phi_t,P_t) &=  \frac{1}{2} \tr\big( K_t  \Sigma_t K_t^\top) + \tr(P_t^\top K_t \Phi_t  \big),
   \end{align*}
   for $K_t$ satisfying \eqref{eq:solution-form-problem-1} and where $P_t$ is an $n \times n$ Lagrange multiplier (costate) matrix that enforces \eqref{eq:state-transition}. 
    The first order necessary conditions in $P_t$ and  $\Phi_t$ are such that $\dot{\Phi}_t^\star  = \partial \mathcal H/\partial P_t $ and $\dot{P}_t^\star  = -\partial \mathcal H/\partial \Phi_t $, respectively. These two conditions yield Equations \eqref{eq:neccPhi} and \eqref{eq:neccP}, in that order.

    Using \eqref{eq:solution-form-problem-1}, we rewrite the previous Hamiltonian as
    \begin{align*}
         \mathcal H(\Omega_t, \Phi_t,P_t) &=  \frac{1}{2} \tr\big( \big(\frac{1}{2}\dot{\Sigma}_t+\Omega_t \big)\Sigma_t^{-1}  \big(\frac{1}{2}\dot{\Sigma}_t+\Omega_t\big)^\top\big) \\
        &+  \tr \big(P_t^\top\big(\frac{1}{2}\dot{\Sigma}_t+\Omega_t\big) \Sigma_t^{-1} \Phi_t  \big),
    \end{align*}
    bearing in mind that $\Omega_t$ is skew-symmetric. 
    In this case, Pontryagin's maximum principle is such that $\partial \mathcal H /\partial \Omega_t  =0$, from which
    the Lyapunov equation \eqref{eq:lyap-omega} can be obtained.

    The terminal condition \eqref{eq:push-lin} along with tracers' condition \eqref{eq:Yt-constraint} define the surface $\mathcal S$ in \eqref{eq:surfnecc}. Thus, it is necessary that $\Phi_1^\star  \in \mathcal S$. According to \cite[Section 5.1]{kirk2004optimal}, any admissible variation for the optimal value $\Phi_1^\star$, $\delta \Phi_1^\star$, has to be tangent to $\mathcal S$, i.e., $\delta\Phi_1^\star \in T_{\Phi_1^\star}\mathcal{S}$. We note,
    \begin{align*}
        \tr(({P_1^\star})^\top \delta \Phi_1^\star)=0,
    \end{align*}
    for any admissible variation $\delta \Phi_1^\star$, cf. \cite[Equation 5.1-18]{kirk2004optimal}, which necessitates that ${P_1^\star} \perp T_{\Phi_1^\star}\mathcal{S}$.    
\end{proof}

Any solution to Problem~\ref{problem:knowledge-on-ens} must satisfy the necessary conditions of optimality stated in Proposition~\ref{prop:proptracer}, constituting a nonlinear two-point boundary value problem. A solution to this problem, which is guaranteed to exist as a consequence of Theorem~\ref{thm:existence-of-solutions-problem-1}, provides a candidate flow that reconciles the data of Problem~\ref{problem:knowledge-on-ens} with the particle dynamics \eqref{eq:lineardyn} using a suitable linear feedback law. Numerical results, in addition to visualization of the flow, are presented in Section \ref{sec:example-1} to illustrate the nature of the solutions to this problem.

\subsection{Internal Dynamics from Tracer Trajectories}\label{sec:problem 2}

This subsection deals with a counterpart of Problem~\ref{problem:knowledge-on-ens}. In it, the data of the problem we consider here is in the form of the distributional constraints \eqref{eq:covariance-constraint} with $\mathcal{T}_1=\{0,1\}$ and the full trajectories of the tracers \eqref{eq:Yt-constraint} with $\mathcal{T}_2=[0,1]$. Our task is once again to identify the dynamics in \eqref{eq:state-transition}. If \eqref{eq:cost-kinetic-energy-Phi} is taken as the cost of transport, then a candidate problem formulation can be stated as follows. 
   \begin{problem}\label{problem:singular}
  Given $(\Sigma_t)_{t \in \mathcal{T}_1} \in {\rm Sym}^+(n)$ with $\mathcal{T}_1=\{0,1\}$ and $(Y_t)_{t  \in \mathcal{T}_2} \in \mathbb R^{n \times m}$ with $\mathcal{T}_2=[0,1]$, find
    \begin{align*}
         (K_t^\star, \Phi_t^\star) := \arg \min_{(K_t, \Phi_t)} J_{\rm KE}
    \end{align*}
    subject to the constraints \eqref{eq:state-transition}, \eqref{eq:covariance-constraint}, and \eqref{eq:Yt-constraint}.
    \end{problem}

However, this candidate formulation may not admit a solution, in general; there are cases where $\Phi_t $ becomes singular at some point in time, see \cite[Example 1]{abdelgalil2025collective}. To avert this issue, we modify the cost in Problem~\ref{problem:singular} by augmenting it with the ``weighted attention" term in \eqref{eq:attention-cost-Phi}, which penalizes the $L_2$-norm of the gain $K_t$. Such an augmentation traces back to R.W. Brockett \cite{brockett1997minimum}, according to whom,
    \begin{quote}
  \textit{``... the easiest control law to
implement is a constant input. Anything else requires some attention. The more frequently the control changes, the more effort it takes to implement it.
[...], it can be argued that the cost of implementation is linked to the rate at which the control changes with changing values of $x$ [state] and $t$ [time]." }
    \end{quote}

        For convenience, we restrict ourselves to a spatial attention term as in \eqref{eq:attention-cost-X} or \eqref{eq:attention-cost-Phi}. Thereby, we state the adaptation of Problem~\ref{problem:singular} as follows.    
\begin{problem} \label{problem:dual}
    Given $(\Sigma_t)_{t \in \mathcal{T}_1} \in {\rm Sym}^+(n)$ with $\mathcal{T}_1=\{0,1\}$ and $(Y_t)_{t  \in \mathcal{T}_2} \in \mathbb R^{n \times m}$ with $\mathcal{T}_2=[0,1]$, find 
           \begin{align}\label{eq:cost-with-attention}
 (K_t^\star, \Phi_t^\star): = \arg \min_{(K_t, \Phi_t)} J_{\rm KE} + \varepsilon J_{\rm A},
  \end{align}
    for $\varepsilon >0$ and subject to the constraints \eqref{eq:state-transition}, \eqref{eq:covariance-constraint}, and \eqref{eq:Yt-constraint}. 
\end{problem}

A curve $(K_t,\Phi_t)_{t\in[0,1]}$ is said to be \emph{admissible} for Problem~\ref{problem:dual} if $(K_t)_{t \in[0,1]}$ is square integrable and $(\Phi_t)_{t \in[0,1]}$ is absolutely continuous, and are such that \eqref{eq:state-transition} is satisfied (whenever $\dot{\Phi}_t$ exists) as well as constraints \eqref{eq:covariance-constraint} and \eqref{eq:Yt-constraint}, and the value of the functional \eqref{eq:cost-with-attention} is finite. The following proposition provides a more explicit characterization of the class of admissible curves.

\begin{prop}\label{prop:admissible-curves-problem-2}
    Fix $N_0\in\mathbb{R}^{(n-m)\times n}$ such that
    \begin{align}\label{eq:Mat-N0}
        N_0 Y_0 = 0, ~\mbox{ and } \ \mathrm{rank}([Y_0, \,N_0^\top])= n.
    \end{align}
    Then, a curve $(K_t,\Phi_t)_{t\in[0,1]}$ is an admissible curve for Problem~\ref{problem:dual} if and only if the following four conditions hold. First, the constraint \eqref{eq:covariance-constraint} is satisfied, second,
    \begin{align}\label{eq:tracer-displacement}
        Y_1=\Phi_1 Y_0,
    \end{align}
    third, there exists an almost everywhere unique square integrable curve $(R_t)_{t\in[0,1]}\in\mathbb{R}^{n\times (n-m)}$ such that
    \begin{subequations}\label{eq:solution-form-problem-2}
        \begin{align}
            K_t &= M_t + R_t N_t, \\
           \text{ with } \ M_t &= \dot{Y}_t (Y_t^\top Y_t)^{-1} Y_t^\top, \label{eq:solution-form-problem-2-Mt} \\
           \text{ and } \ \dot{N}_t & = - N_t M_t, \label{eq:dynamics-Nt}
        \end{align}
    for almost all $t\in[0,1]$, and fourth, the constraint \eqref{eq:state-transition} is satisfied (whenever $\dot{\Phi}_t$ exists).
    \end{subequations}
\end{prop}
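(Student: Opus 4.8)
The plan is to follow the same logic as the proof of Proposition~\ref{prop:admissible-curves-problem-1}, but with the roles of the two data types interchanged: since here $\mathcal{T}_2=[0,1]$, it is the tracer constraint that now holds for all $t$ and drives the algebraic characterization of $K_t$, whereas the covariance constraint, with $\mathcal{T}_1=\{0,1\}$, survives only at the endpoints. First I would record the consequence of the running tracer data. Because \eqref{eq:Yt-constraint} holds for every $t$ and $(Y_t)_{t\in[0,1]}$ is $C^1$, differentiating $Y_t=\Phi_t Y_0$ and substituting \eqref{eq:state-transition} gives, for almost all $t$,
\[ \dot{Y}_t = K_t Y_t. \]
The covariance constraint, on the other hand, reduces to $\Sigma_1=\Phi_1\Sigma_0\Phi_1^\top$ at $t=1$ (the $t=0$ instance being automatic as $\Phi_0=I$), which is the first stated condition. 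The whole task is therefore to describe all square-integrable $K_t$ solving the displayed linear equation, given that $Y_t$ has full column rank $m<n$.

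For the forward implication I would produce a particular solution and parametrize the homogeneous part. The matrix $M_t$ in \eqref{eq:solution-form-problem-2-Mt} satisfies $M_tY_t=\dot{Y}_t$ by direct computation, and is continuous---hence bounded on $[0,1]$---because $Y_t$ is $C^1$ and of full column rank. Any admissible $K_t$ then has $(K_t-M_t)Y_t=0$, i.e.\ the rows of $K_t-M_t$ lie in the left null space of $Y_t$. The role of $N_t$ is to supply a continuous basis of that null space: conditions \eqref{eq:Mat-N0} make the rows of $N_0$ span the left null space of $Y_0$, and differentiating $N_tY_t$ while using $\dot{Y}_t=M_tY_t$ together with \eqref{eq:dynamics-Nt} gives $\frac{d}{dt}(N_tY_t)=-N_tM_tY_t+N_tM_tY_t=0$, so $N_tY_t\equiv N_0Y_0=0$; transposing \eqref{eq:dynamics-Nt} exhibits $N_t^\top$ as the image of $N_0^\top$ under an invertible fundamental matrix, so $N_t$ retains full row rank $n-m$. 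Since $\dim\operatorname{Im}Y_t=m$, the rows of $N_t$ span the left null space of $Y_t$ for all $t$, whence $K_t-M_t=R_tN_t$ with the uniquely determined $R_t=(K_t-M_t)N_t^\top(N_tN_t^\top)^{-1}$. This $R_t$ is square integrable because $K_t$ is and $M_t,N_t$ are bounded, which yields the stated form \eqref{eq:solution-form-problem-2} and the almost-everywhere uniqueness of $R_t$.

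For the converse, given the four conditions, $K_t=M_t+R_tN_t$ is square integrable (as $M_t,N_t$ are bounded and $R_t\in L^2$) and the solution $\Phi_t$ of \eqref{eq:state-transition} with $\Phi_0=I$ is absolutely continuous (since $K_t\in L^2\subseteq L^1$), so it only remains to verify the running tracer constraint \eqref{eq:Yt-constraint}. This is the step I expect to be the main obstacle, since the hypotheses supply the tracer data only at $t=1$ via \eqref{eq:tracer-displacement}, yet admissibility requires $Y_t=\Phi_tY_0$ throughout. I would set $Z_t:=\Phi_tY_0$, so that $Z_0=Y_0$ and $\dot{Z}_t=K_tZ_t$, and then track $N_tZ_t$: using \eqref{eq:dynamics-Nt} and $K_t=M_t+R_tN_t$ one computes $\frac{d}{dt}(N_tZ_t)=(N_tR_t)(N_tZ_t)$, a homogeneous linear ODE with initial value $N_0Z_0=N_0Y_0=0$, forcing $N_tZ_t\equiv0$. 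The $Z_t$-dynamics then collapse to $\dot{Z}_t=M_tZ_t$, identical to $\dot{Y}_t=M_tY_t$ with the same initial condition $Z_0=Y_0$; by uniqueness $Z_t=Y_t$, so \eqref{eq:Yt-constraint} holds for all $t$---in particular recovering \eqref{eq:tracer-displacement}. Together with the endpoint covariance condition and \eqref{eq:state-transition}, this establishes admissibility and completes the equivalence.
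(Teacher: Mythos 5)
Your proof is correct and follows essentially the same route as the paper's: differentiate the running tracer constraint to obtain $\dot{Y}_t=K_tY_t$, take $M_t$ as a particular solution, and use the ODE-propagated $N_t$ (full row rank, annihilating $Y_t$) as a basis for the left null space of $Y_t$ so that the homogeneous part is $R_tN_t$ with $R_t=(K_t-M_t)N_t^\top(N_tN_t^\top)^{-1}$ square integrable. The only difference is that you spell out the converse---showing $\Phi_tY_0=Y_t$ for all $t$ by proving $N_t\Phi_tY_0\equiv 0$ and then invoking uniqueness for $\dot{Z}_t=M_tZ_t$---where the paper simply states that it ``follows by direct integration''; this is a useful elaboration of the same argument rather than a departure from it.
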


\begin{proof}
    Let $(K_t,\Phi_t)_{t\in[0,1]}$ be an admissible curve for Problem~\ref{problem:dual}.  Since $\mathcal{T}_2=[0,1]$ and $(Y_t)_{t \in\mathcal{T}_2}$ is continuously differentiable by assumption, direct differentiation of \eqref{eq:Yt-constraint} followed by substitution of \eqref{eq:state-transition} shows that $(K_t)_{t \in[0,1]}$ must satisfy
    \begin{align}\label{eq:linear-eqn}
        \dot{Y}_t = K_t Y_t,
    \end{align}
    for almost all $t\in[0,1]$. It follows that
    \begin{align}    \label{eq:null-Zt}
    K_t = M_t + Z_t,
    \end{align}
    where $(Z_t)_{t\in[0,1]}\in\mathbb{R}^{n\times n}$ is any square integrable curve satisfying $Z_t Y_t= 0$, i.e., the column space of $Y_t$ is a subspace of the null space of $Z_t$.
    
    To proceed, let $N_0\in \mathbb{R}^{(n-m)\times n}$ be any full row-rank matrix such that $N_0 Y_0 = 0$ and define the curve $(N_t)_{t\in[0,1]}$ as the solution to
    \begin{align}\label{eq:ivp-nullspace-basis}
        \dot{N}_t & := - N_t M_t,
    \end{align}
    from the initial condition $N_0$, where $M_t$ is as in \eqref{eq:solution-form-problem-2-Mt}. We claim that $N_{t}$ is of full row rank, and the null space of $N_t$ equals the column space of $Y_t$ for almost all $t\in[0,1]$. To see this, note that \eqref{eq:ivp-nullspace-basis} is a linear time-varying system of equations, and so the solution to \eqref{eq:ivp-nullspace-basis} from the initial condition $N_0$ is uniquely given by
    \begin{align*}
        N_t&= N_0\Psi_t,
    \end{align*}
    where $\Psi_t$ is the fundamental solution matrix associated with the linear system \eqref{eq:ivp-nullspace-basis}. Since $\Psi_t$ is invertible for every $t$, it follows that $N_{t}$ is full row rank if and only if $N_0$ is full row rank. On the other hand, direct differentiation of the matrix $N_t Y_t$ followed by a substitution of \eqref{eq:solution-form-problem-2}, \eqref{eq:null-Zt} and \eqref{eq:ivp-nullspace-basis} yields
    \begin{align*}
        \dot{N}_t Y_t + N_t\dot{Y}_t&= 0,
    \end{align*}
    implying
    \begin{align*}
        N_t Y_t = N_0 Y_0 = 0,  \ \ \forall t\in[0,1].
    \end{align*}
    
    From the above, it follows that any square integrable curve $(Z_t)_{t\in[0,1]}\in\mathbb{R}^{n\times n}$ satisfying the constraint $Z_t Y_t= 0$ must be of the form $Z_t = R_t N_t$, for an almost everywhere unique square integrable curve $(R_t)_{t\in[0,1]}\in\mathbb{R}^{n\times (n-m)}$. Finally, since $1\in\mathcal{T}_2=[0,1]$, the curve $(\Phi_t)_{t \in[0,1]}$ must be such that $\Phi_1$ satisfies \eqref{eq:tracer-displacement}. The converse implication of the proposition follows by direct integration.
\end{proof}

 In essence, Equation \eqref{eq:Mat-N0} introduces a matrix $N_0$ whose null space contains the column space of $Y_0$.
If the dynamics of $N_t$ coincide with \eqref{eq:dynamics-Nt}, then the column space of $Y_t$ is ensured to remain in the null space of $N_t$ for all $t\in [0,1]$. As such, Proposition~\ref{prop:admissible-curves-problem-2} suggests a reformulation of Problem~\ref{problem:dual}, replacing condition \eqref{eq:Yt-constraint} on the state transition $\Phi_t$ with an explicit constraint on the gain $K_t$. The advantage of the new formulation lies in the fact that the search for the optimal curve reduces to searching over a lower-dimensional control space, namely, the curve $(R_t)_{t\in[0,1]}$.

\begin{taggedproblem}{\ref{problem:dual}$^\prime$} \label{problem:dual-reformulation} 
    Given $(\Sigma_t)_{t \in \mathcal{T}_1} \in {\rm Sym}^+(n)$ with $\mathcal{T}_1=\{0,1\}$ and $(Y_t)_{t  \in \mathcal{T}_2} \in \mathbb R^{n \times m}$ with $\mathcal{T}_2=[0,1]$, find 
    \begin{align*}
        (K_t^\star, \Phi_t^\star): = \arg \min_{(K_t, \Phi_t)} J_{\rm KE} + \varepsilon J_{\rm A},
    \end{align*}
    for $\varepsilon >0$ and subject to the constraints \eqref{eq:state-transition}, \eqref{eq:covariance-constraint}, \eqref{eq:tracer-displacement}, and \eqref{eq:solution-form-problem-2}. 
\end{taggedproblem}

Before we proceed with deriving the necessary conditions of optimality, we state in passing a theorem on the existence of solutions to Problem~\ref{problem:dual-reformulation}.

\begin{thm}\label{thm:existence-of-solutions-problem-2}
    Suppose that there exists at least one admissible curve for Problem~\ref{problem:dual-reformulation}. Then, there exists an absolute minimizer $(K_t^\star,\Phi_t^\star)_{t\in[0,1]}$ for Problem~\ref{problem:dual-reformulation}.
\end{thm}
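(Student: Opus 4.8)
The plan is to establish existence by the direct method of the calculus of variations, in the same spirit as Theorem~\ref{thm:existence-of-solutions-problem-1}, i.e., ultimately by verifying the hypotheses of \cite[Theorem 11.4.vi]{cesari1983optimization}. The decisive feature exploited is that the regularizer $\varepsilon J_{\rm A}$ makes the cost \emph{coercive} in the control, which is exactly the ingredient missing from the unregularized Problem~\ref{problem:singular}. Concretely, I would work with the reduced formulation of Proposition~\ref{prop:admissible-curves-problem-2}: the free control is the curve $(R_t)_{t\in[0,1]}$, the gain is $K_t = M_t + R_t N_t$ with $M_t, N_t$ fixed by the data, and the state $(\Phi_t)_{t\in[0,1]}$ obeys the bilinear dynamics $\dot\Phi_t = (M_t + R_t N_t)\Phi_t$ with $\Phi_0 = I$. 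On the admissible set the cost reads $\tfrac12\int_0^1 \big[\tr(K_t \Phi_t \Sigma_0 \Phi_t^\top K_t^\top) + \varepsilon\,\tr(K_t K_t^\top)\big]\,\mathrm{d}t$, whose integrand is convex (indeed quadratic) in $R_t$ for frozen $\Phi_t$ and jointly continuous in $(\Phi_t, R_t)$.

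First I would fix a minimizing sequence $(K_t^{(j)},\Phi_t^{(j)})_{t\in[0,1]}$ of admissible curves, which exists since the admissible set is nonempty by hypothesis and the cost is nonnegative, so the infimum is finite. Because $J_{\rm KE}\ge 0$, the inequality $\tfrac{\varepsilon}{2}\int_0^1\tr(K_t^{(j)}(K_t^{(j)})^\top)\,\mathrm{d}t \le J_{\rm KE}+\varepsilon J_{\rm A}$ shows $(K_t^{(j)})$ is bounded in $L^2([0,1];\mathbb{R}^{n\times n})$. Since $N_t = N_0\Psi_t$ is continuous and of full row rank on the compact interval $[0,1]$, the matrix $N_tN_t^\top$ is continuous and positive definite, hence $N_tN_t^\top\succeq cI$ for some $c>0$; therefore $\tr(R_t N_t N_t^\top R_t^\top)\ge c\,\tr(R_t R_t^\top)$, and the $L^2$ bound on $K_t-M_t = R_t N_t$ transfers to an $L^2$ bound on $(R_t^{(j)})$. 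Passing to a subsequence, $R_t^{(j)}\rightharpoonup R_t^\star$ weakly in $L^2$, equivalently $K_t^{(j)}\rightharpoonup K_t^\star := M_t + R_t^\star N_t$. In parallel, from $\dot\Phi_t=K_t\Phi_t$ and the Cauchy--Schwarz inequality one obtains a uniform bound $\sup_t\|\Phi_t^{(j)}\|\le C_1$ and an equicontinuity estimate $\|\Phi_t^{(j)}-\Phi_s^{(j)}\|\le C_2\sqrt{|t-s|}$, so that Arzel\`a--Ascoli yields, along a further subsequence, uniform convergence $\Phi_t^{(j)}\to\Phi_t^\star$.

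The crux, and the step I expect to be the main obstacle, is to close the loop against the \emph{bilinear} coupling $K_t\Phi_t$, where $K$ converges only weakly: I must show that the uniform limit $\Phi^\star$ is generated by the weak limit $K^\star$ and that the cost does not jump up in the limit. The resolution is that $\Phi$ converges \emph{strongly} (uniformly). Writing $\Phi_t^{(j)} = I + \int_0^t K_s^{(j)}\Phi_s^{(j)}\,\mathrm{d}s$ and splitting $K_s^{(j)}\Phi_s^{(j)} = K_s^{(j)}\Phi_s^\star + K_s^{(j)}(\Phi_s^{(j)}-\Phi_s^\star)$, the second term integrates to zero since $\int_0^t\|K_s^{(j)}(\Phi_s^{(j)}-\Phi_s^\star)\|\,\mathrm{d}s \le \|K^{(j)}\|_{L^2}\sup_s\|\Phi_s^{(j)}-\Phi_s^\star\|\to 0$, while $\int_0^t K_s^{(j)}\Phi_s^\star\,\mathrm{d}s\to\int_0^t K_s^\star\Phi_s^\star\,\mathrm{d}s$ by weak $L^2$ convergence tested against the fixed bounded curve $\Phi^\star$; hence $\Phi^\star$ solves the dynamics with $K^\star$. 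The same strong/weak dichotomy gives weak lower semicontinuity of the cost: with $\Phi^\star$ held fixed the integrand is convex in $K$, so by the standard lower semicontinuity of convex integral functionals (Tonelli--Serrin) one gets $J[K^\star,\Phi^\star]\le\liminf_j J[K^{(j)},\Phi^{(j)}]$. It remains to check admissibility of the limit: the terminal set $\{\Phi:\Phi\Sigma_0\Phi^\top=\Sigma_1,\ \Phi Y_0=Y_1\}$ is closed, so $\Phi_1^\star$ inherits the covariance constraint \eqref{eq:covariance-constraint}; the pair $(K^\star,\Phi^\star)$ retains the structure \eqref{eq:solution-form-problem-2} and solves \eqref{eq:state-transition}, so by the converse direction of Proposition~\ref{prop:admissible-curves-problem-2} the tracer-trajectory constraint, in particular \eqref{eq:tracer-displacement}, holds; and since $K^\star\in L^2\subset L^1$, the flow stays in $\mathrm{GL}(n)$ (its determinant equals $\exp\int_0^t\tr K_s^\star\,\mathrm{d}s\neq 0$) --- this is precisely where the regularization averts the singular behavior of Problem~\ref{problem:singular}. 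Thus $(K^\star,\Phi^\star)$ is admissible and attains the infimum, i.e., it is an absolute minimizer. Equivalently, recording that the integrand's quadratic (hence superlinear) growth in $R_t$ and its convexity for frozen $\Phi_t$ verify the growth and seminormality hypotheses of \cite[Theorem 11.4.vi]{cesari1983optimization} furnishes the same conclusion.
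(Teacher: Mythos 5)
Your proposal is correct and follows essentially the route the paper indicates: the paper omits the proof, stating only that it ``consists of verifying that the conditions of \cite[Theorem 11.4.vi]{cesari1983optimization} hold,'' and your direct-method argument (coercivity from the $\varepsilon J_{\rm A}$ term, weak $L^2$ compactness of $K_t$ transferred to $R_t$ via the uniform positive-definiteness of $N_tN_t^\top$, Arzel\`a--Ascoli for $\Phi_t$, closure of the bilinear dynamics under the strong/weak pairing, and convexity-based lower semicontinuity) is precisely the content of verifying those hypotheses. You in fact supply the details the paper leaves out, including the key observation that the regularizer forces $K^\star\in L^2\subset L^1$ and hence keeps $\det\Phi_t^\star=\exp\int_0^t\tr K_s^\star\,\mathrm{d}s$ nonzero, which is where the unregularized Problem~\ref{problem:singular} can fail.
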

%

%

Similar to Theorem \ref{thm:existence-of-solutions-problem-1}, the proof of the above statement consists of verifying that the conditions of \cite[Theorem 11.4.vi]{cesari1983optimization} hold. 
We now proceed to state the necessary conditions of optimality for Problem~\ref{problem:dual-reformulation}.

\begin{prop}
    The absolute minimizer $(K_t^\star ,\Phi_t^\star)_{t \in [0,1]}$ of Problem~\ref{problem:dual}, or equivalently Problem~\ref{problem:dual-reformulation}, is such that
    \begin{subequations}
    \begin{align}\label{eq:neccPhi2}
        \dot{\Phi}_t^\star  &=  K_t^\star  \Phi_t^\star , 
        \end{align}
        with $\Phi_0^\star  = I$ and  $K_t^\star$ coincides with \eqref{eq:solution-form-problem-2} for
        \begin{align}
           \hspace{-20pt}   R_t &=   -(M_t B + P_t^\star (\Phi_t^\star)^\top ) N_t^\top (N_t B N_t^\top)^{-1},  \label{eq:Rt-mat} \\
     \hspace{-10pt}   \text{where } \hspace{5pt}   \dot{P}_t^\star& =  -{(K_t^\star)}^\top (K_t^\star \Phi_t^\star \Sigma_0 +P_t^\star),      
          \label{eq:neccP2} \\ 
 \hspace{-20pt}   \text{and } \hspace{10pt} B &=\big(\Phi_t^\star \Sigma_0 (\Phi_t^\star)^\top +\varepsilon I \big).  \label{eq:Bmatrix} 
        \end{align}
        Moreover, if $\mathcal S\subset\mathrm{GL}(n)$ is the surface defined by
        \begin{align} \label{eq:surfnecc2}
            \mathcal{S}:=\{\Phi\in\mathrm{GL}(n)~|~\Phi\Sigma_0\Phi^\top=\Sigma_1,\,\Phi Y_0= Y_1\},
            \end{align}
      \end{subequations}
        then \eqref{eq:neccPhi2} and \eqref{eq:neccP2} are subject to the mixed boundary conditions $\Phi_1^\star  \in \mathcal S$ and ${P_1^\star} \perp T_{\Phi_1^\star }\mathcal{S}$, respectively.          
\end{prop}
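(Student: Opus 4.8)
The plan is to derive the stated conditions by applying the classical first-order necessary conditions of optimal control \cite{kirk2004optimal} to the reduced formulation in Problem~\ref{problem:dual-reformulation}, in exact parallel with the proof of Proposition~\ref{prop:proptracer}. By Proposition~\ref{prop:admissible-curves-problem-2}, an admissible curve is parametrized by the \emph{reduced} control $(R_t)_{t\in[0,1]}$ via $K_t = M_t + R_t N_t$, with $M_t,N_t$ fixed by the tracer data through \eqref{eq:solution-form-problem-2-Mt} and \eqref{eq:solution-form-problem-2}; the genuine state is $\Phi_t$, governed by \eqref{eq:state-transition}. I would therefore introduce an $n\times n$ costate (Lagrange multiplier) $P_t$ enforcing $\dot\Phi_t = K_t\Phi_t$ and, after substituting the interior identity $\Sigma_t=\Phi_t\Sigma_0\Phi_t^\top$ into $J_{\rm KE}+\varepsilon J_{\rm A}$, form the control Hamiltonian
\begin{align*}
\mathcal H(R_t,\Phi_t,P_t)=\tfrac12\tr\big(K_t(\Phi_t\Sigma_0\Phi_t^\top+\varepsilon I)K_t^\top\big)+\tr(P_t^\top K_t\Phi_t),\quad K_t=M_t+R_tN_t.
\end{align*}

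The three stationarity conditions then follow in order. The condition $\dot\Phi_t^\star=\partial\mathcal H/\partial P_t$ reproduces \eqref{eq:neccPhi2}. The costate condition $\dot P_t^\star=-\partial\mathcal H/\partial\Phi_t$ requires differentiating the running cost in $\Phi_t$: unlike in Problem~\ref{problem:knowledge-on-ens}, the covariance is no longer prescribed in the interior but equals $\Phi_t\Sigma_0\Phi_t^\top$, so a short matrix computation (using symmetry of $\Sigma_0$ and treating $K_t$ as independent of $\Phi_t$) gives $\partial\mathcal H/\partial\Phi_t=(K_t^\star)^\top\big(K_t^\star\Phi_t^\star\Sigma_0+P_t^\star\big)$, which is precisely \eqref{eq:neccP2}. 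Finally, stationarity in the reduced control, $\partial\mathcal H/\partial R_t=0$, yields, upon substituting $K_t=M_t+R_tN_t$ and writing $B$ as in \eqref{eq:Bmatrix}, the linear relation $M_tBN_t^\top+R_tN_tBN_t^\top+P_t^\star(\Phi_t^\star)^\top N_t^\top=0$; solving for $R_t$ produces \eqref{eq:Rt-mat}. This last step needs $N_tBN_t^\top$ to be invertible, which I would justify by noting $B\succ0$ (since $\Sigma_0\succ0$ and $\varepsilon>0$) together with the full row rank of $N_t$ for almost every $t$ established in Proposition~\ref{prop:admissible-curves-problem-2}, so that $N_tBN_t^\top\succ0$.

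For the boundary conditions I would mirror the argument in Proposition~\ref{prop:proptracer}. The terminal covariance constraint \eqref{eq:covariance-constraint} at $t=1$ and the tracer displacement \eqref{eq:tracer-displacement} confine $\Phi_1^\star$ to the surface $\mathcal S$ in \eqref{eq:surfnecc2}, so $\Phi_1^\star\in\mathcal S$ is necessary. I would remark that \eqref{eq:tracer-displacement} is in fact already enforced by the admissible parametrization (since $N_tY_t=0$ gives $\dot Y_t=K_tY_t$, whence $Y_t=\Phi_tY_0$ for all $t$), yet it still shapes $\mathcal S$ and thus the admissible terminal variations. Since $\Phi_0=I$ is fixed and $\Phi_1$ is free on $\mathcal S$, every admissible variation obeys $\delta\Phi_1^\star\in T_{\Phi_1^\star}\mathcal S$, and the transversality relation $\tr((P_1^\star)^\top\delta\Phi_1^\star)=0$ for all such variations (cf.\ \cite[Sec.~5.1]{kirk2004optimal}) forces $P_1^\star\perp T_{\Phi_1^\star}\mathcal S$.

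The main obstacle is the costate equation. Because only the \emph{endpoint} covariances are prescribed, the running cost depends on the state through $\Sigma_t=\Phi_t\Sigma_0\Phi_t^\top$, coupling cost and dynamics and generating the extra nonlinear term $(K_t^\star)^\top K_t^\star\Phi_t^\star\Sigma_0$ that is absent in Problem~\ref{problem:knowledge-on-ens} (where $\Sigma_t$ was given data and dropped out of $\partial\mathcal H/\partial\Phi_t$). The remaining difficulty is bookkeeping: carrying out the matrix differentials correctly, exploiting the symmetry of $\Sigma_0$ and $B$, and verifying the positivity of $N_tBN_t^\top$ that legitimizes inverting it to solve for $R_t$ in \eqref{eq:Rt-mat}.
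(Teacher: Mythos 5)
Your proposal is correct and follows essentially the same route as the paper: form the control Hamiltonian $\tfrac12\tr\big(K_t(\Phi_t\Sigma_0\Phi_t^\top+\varepsilon I)K_t^\top\big)+\tr(P_t^\top K_t\Phi_t)$ with $K_t=M_t+R_tN_t$, obtain \eqref{eq:neccPhi2}, \eqref{eq:neccP2}, and \eqref{eq:Rt-mat} from $\partial\mathcal H/\partial P_t$, $-\partial\mathcal H/\partial\Phi_t$, and $\partial\mathcal H/\partial R_t=0$, and carry over the transversality argument for $\Phi_1^\star\in\mathcal S$ and $P_1^\star\perp T_{\Phi_1^\star}\mathcal S$ from the earlier proposition. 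Your added justification that $N_tBN_t^\top\succ0$ (from $B\succ0$ and the full row rank of $N_t$) is a welcome detail the paper leaves implicit.
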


\begin{proof}
   As before, we express the control Hamiltonian of Problem~\ref{problem:dual-reformulation} as
         \begin{align*}
         \mathcal H(K_t, \Phi_t,P_t) &=  \frac{1}{2} \tr\big(K_t \Phi_t \Sigma_0  \Phi^\top_t K_t^\top + \varepsilon K_t K_t^\top) \\
         &+ \tr(P_t^\top K_t \Phi_t\big) ,
    \end{align*}
     for $K_t$ satisfying \eqref{eq:solution-form-problem-2} and for $P_t$ an $n \times n$ Lagrange multiplier matrix that enforces \eqref{eq:state-transition}.
    Equations~\eqref{eq:neccPhi2} and \eqref{eq:neccP2} are obtained respectively from the necessary conditions  $\dot{\Phi}_t^\star  = \partial \mathcal H/\partial P_t $ and $\dot{P}_t^\star  = -\partial \mathcal H/\partial \Phi_t $. 
     Exploiting the form in \eqref{eq:solution-form-problem-2}, we rewrite the previous Hamiltonian as
    \begin{align*}
         \mathcal H(R_t, \Phi_t,P_t) &=  \frac{1}{2} \tr\big((M_t+R_t N_t) B (M_t+R_t N_t) ^\top\big)\\
        &+ \tr \big( P_t^\top \big(M_t+R_t N_t\big)\Phi_t\big),
    \end{align*}
    for $B$ satisfying \eqref{eq:Bmatrix}. Indeed, direct application of Pontryagin's maximum principle, $\partial \mathcal H/ \partial R_t = 0$, results in \eqref{eq:Rt-mat}.    
    Lastly, Equations \eqref{eq:push-lin} and \eqref{eq:Yt-constraint} define the surface in \eqref{eq:surfnecc2} and conditions on $\Phi_1^\star$ and $P_1^\star$ from the proof of Proposition~\ref{prop:proptracer} apply verbatim here, which concludes the proof.
\end{proof}



\section{Examples}\label{sec:examples}

\subsection{Example I}\label{sec:example-1}
\begin{figure}[t]
    \centering
          \begin{subcaptionblock}{\linewidth}
              \centering
      \includegraphics[width=0.8\linewidth]{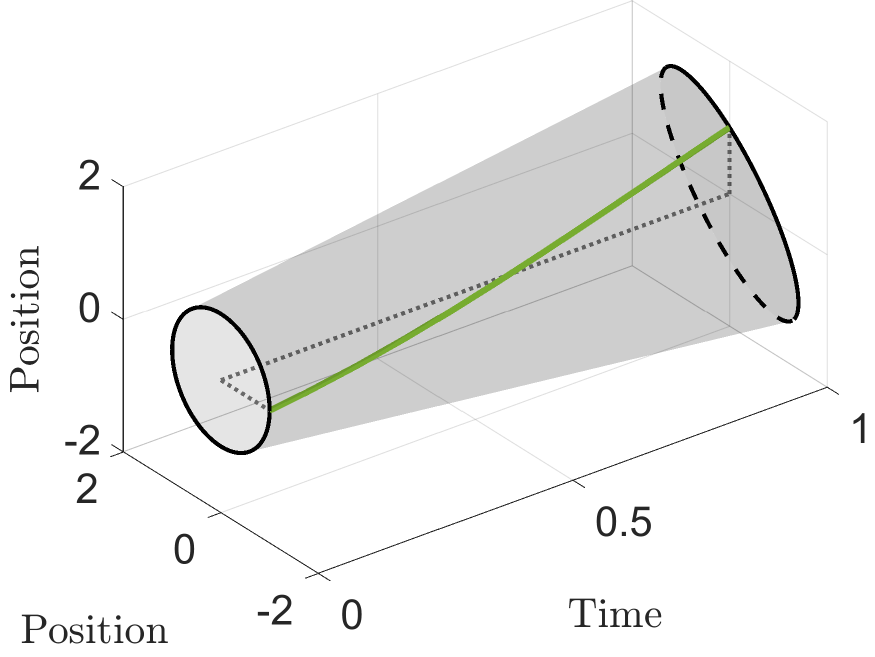}
      \caption{The gray envelope represents the given $(\Sigma_t)_{t \in [0,1]}$ as in \eqref{eq:sigmas_ex1}. The green curve is the resulting trajectory of the tracer between two given endpoints \eqref{eq:tracer0,1}.} \label{fig:Ex1.1}
     \end{subcaptionblock}\\[10pt]
        \begin{subcaptionblock}{\linewidth}
    \centering
    \includegraphics[width=0.8\linewidth]{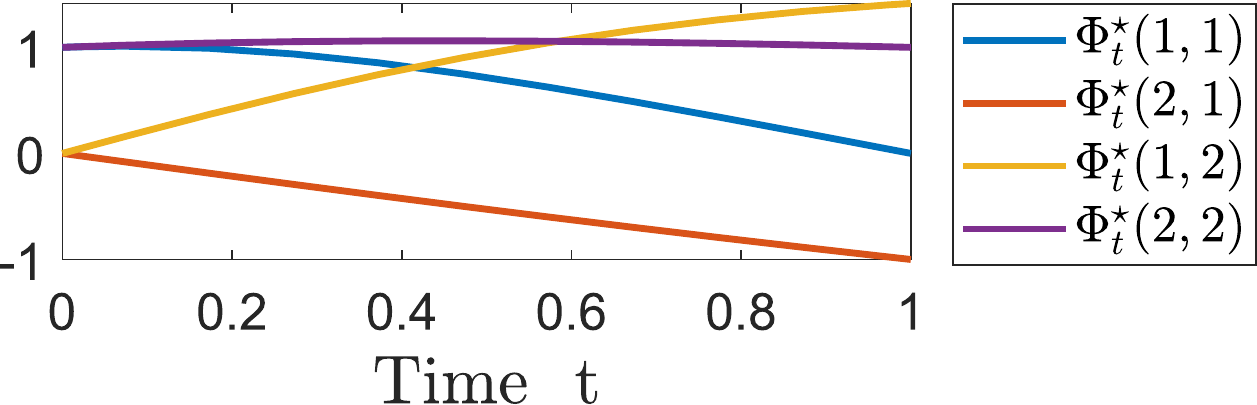}
    \caption{Evolution of the entries of the state transition matrix.} \label{fig:Ex1.2}
      \end{subcaptionblock}
      \caption{Example I}
\end{figure}
We present an academic example for Problem~\ref{problem:knowledge-on-ens-reformulation}. We assume the positions of a single tracer particle at $t=0,t=1$ are given as
\begin{align}\label{eq:tracer0,1}
    Y_0 = \begin{bmatrix}
          -1 \\ 0
    \end{bmatrix}, \ Y_1 = \begin{bmatrix}
        0 \\ 1
    \end{bmatrix}.
\end{align}
Information on the ensemble is also given as
\begin{subequations} \label{eq:sigmas_ex1}
    \begin{align}
    \Sigma_0 =  I&, \  \Sigma_1 = \begin{bmatrix}
         2 &  \sqrt{2} \\
        \sqrt{2} & 2
    \end{bmatrix}, \\
    \Sigma_t = \big((1-t)I+& t \Sigma_1^{\frac{1}{2}}\big)  \big((1-t)I+ t \Sigma_1^{\frac{1}{2}}\big)^\top,\label{eq:McCannflow}
\end{align}
\end{subequations}
for $t \in [0,1]$. The flow $\Sigma_t$ corresponds to the optimal mass transport between $\Sigma_0$ and $\Sigma_1$ (McCann's interpolation \cite{mccann1997convexity,villani2003topics}) with no data on tracer particles. This is depicted in Fig.~\ref{fig:Ex1.1}, where the gray cylinder represents the evolution of an equi-probability level set of the corresponding Gaussian distributions. The availability of data on the terminal disposition of a tracer particle reveals a rotational component of the flow that cannot be gleaned from the McCann flow in \eqref{eq:McCannflow}.

Since $\Phi_1^\star  {(\Phi_1^\star)}^\top = \Sigma_1$, $\Phi_1^\star  = \Sigma_1^{\frac{1}{2}}U_1$, for some $U_1 \in \rm{SO}(2)$, the special orthogonal group of dimension $2$. In the present case, where $n=2$, the matrix $U_1$ is fully determined by the information provided. Therefore, one can solve for $U_1$ satisfying $\Sigma_1^{-\frac{1}{2}} Y_1 = U_1 Y_0$,
for $Y_0,Y_1$ as in \eqref{eq:tracer0,1}. This results in
\begin{align*}
    U_1 = \begin{bmatrix}
       \frac{1}{2} \sqrt{2-\sqrt{2}}& \frac{1}{2} \sqrt{\sqrt{2}+2}\\
       -\frac{1}{2} \sqrt{\sqrt{2}+2}& \frac{1}{2} \sqrt{2-\sqrt{2}}
    \end{bmatrix},
\end{align*}
which corresponds to a rotation by $67.5^\circ$. Then, 
\begin{align}\label{eq:phiex1}
    \Phi_1^\star  = \begin{bmatrix}
        0 & \sqrt{2} \\
        -1 & 1
    \end{bmatrix},
\end{align} and the system \eqref{eq:neccPhi}-\eqref{eq:neccP} can be solved as a two-point boundary value problem between $\Phi_0^\star =I$ and \eqref{eq:phiex1}. 
The transitions $\Phi_t^\star$ shown in Fig.~\ref{fig:Ex1.2} are obtained from the shooting method. \footnote{The source code is available at:\\
\texttt{https://github.com/a-eld/TransportWithTracers-1.}}These state transitions result in the trajectory depicted in green in Fig.~\ref{fig:Ex1.1}, indeed reconciling the information in \eqref{eq:tracer0,1} and \eqref{eq:sigmas_ex1}. 

\subsection{Example II}
In contrast to the previous example, we assume here that the full trajectory of a single tracer is given, as shown in Fig.~\ref{fig:Ex2.1} in green. This is precisely a trajectory between the endpoints:
\begin{align} \label{eq:tracer0-1}
    Y_0 = \begin{bmatrix}
        -\frac{\sqrt{3}}{2} \\ \frac{1}{2}\\[3pt]
    \end{bmatrix},  \ Y_1 = \begin{bmatrix}
        0 \\ -\sqrt{3}
    \end{bmatrix}.
\end{align}
Moreover, we assume the endpoint covariances are
\begin{align*}
    \Sigma_0 = I,  \  \Sigma_1 = 3I.
\end{align*}

The task here is to numerically demonstrate the solution of Problem~\ref{problem:dual-reformulation} in the two-dimensional case. Similar to the previous example, the terminal state transition matrix can be fully determined as
$\Phi_1^\star  = \Sigma_1^{\frac{1}{2}} O_1$, for 
\begin{align*}
    O_1 = \begin{bmatrix}
         -\frac{1}{2} & -\frac{\sqrt{3}}{2} \\[3pt]
 \frac{\sqrt{3}}{2} & -\frac{1}{2} 
    \end{bmatrix} \in \rm{SO}(2),
\end{align*}
which amounts to a rotation of $-120^\circ$. The resulting $\Phi_1^\star$ along with $\Phi_0^\star=I$ are used to solve the two-point boundary value problem associated with system \eqref{eq:neccPhi2}-\eqref{eq:Bmatrix} for $\varepsilon =1$  and
\begin{align*}
    N_0 = \begin{bmatrix}
        \frac{1}{2} & \frac{\sqrt{3}}{2}
    \end{bmatrix}.
\end{align*} The obtained entries of $\Phi_t^\star$ are presented in Fig.~\ref{fig:Ex2.2}.\footnote{The state transitions are obtained using MATLAB's solver \texttt{bvp5c}. The associated source code can be found at:\\ \texttt{https://github.com/a-eld/TransportWithTracers-2.}} The gray envelope shown in Fig.~\ref{fig:Ex2.1} delineates the equi-probability level sets of the corresponding Gaussian distributions.
\begin{figure}[t] 
    \centering
          \begin{subcaptionblock}{\linewidth}
              \centering
        \includegraphics[width=0.8\linewidth]{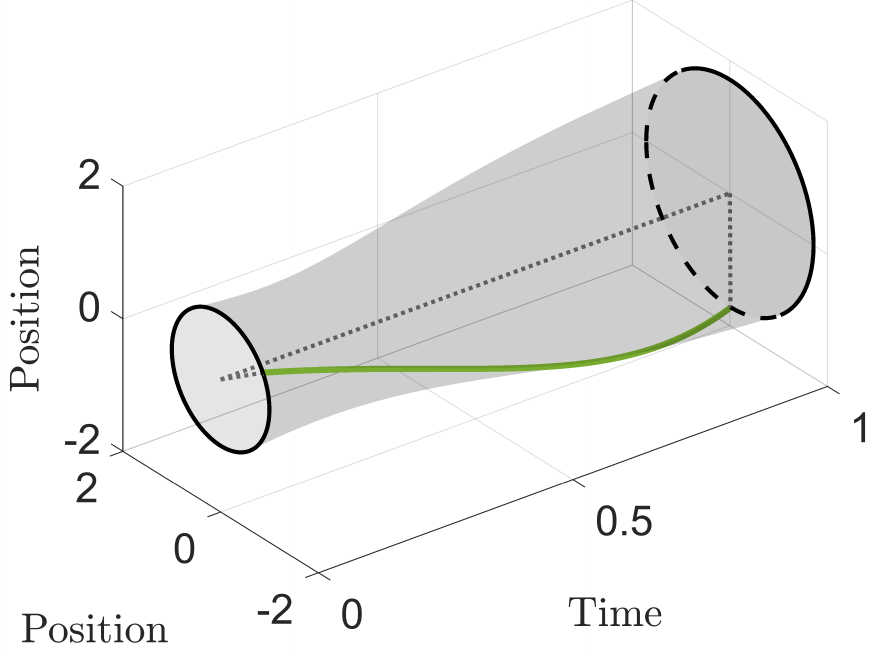}
       \caption{The green curve is the given trajectory of the tracer between two endpoints \eqref{eq:tracer0-1} while the gray envelope depicts the resulting covariances.} \label{fig:Ex2.1}
     \end{subcaptionblock}\\[10pt]
     \begin{subcaptionblock}{\linewidth}
    \centering
    \includegraphics[width=0.8\linewidth]{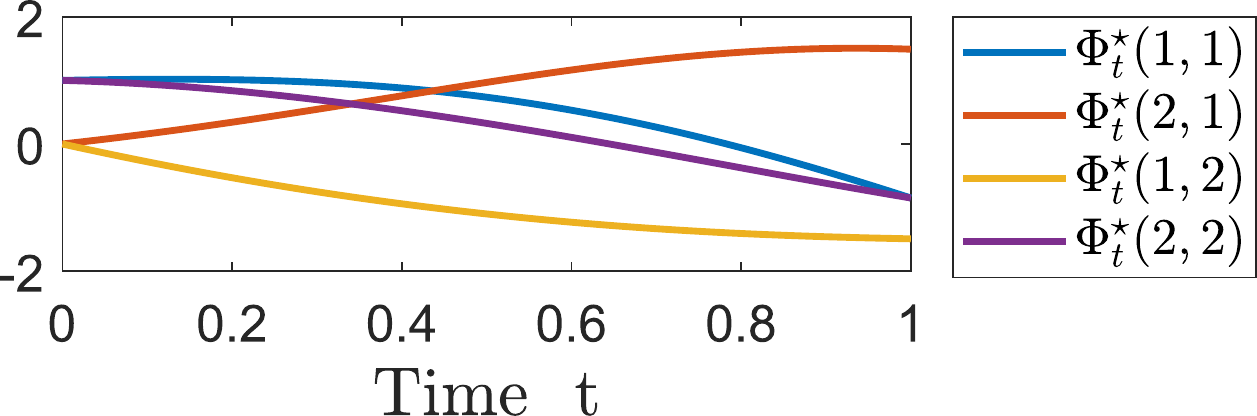}
    \caption{Evolution of the entries of the state transition matrix.}\label{fig:Ex2.2}
          \end{subcaptionblock}
     \caption{Example II}
\end{figure}

\section{Concluding remarks}

The theme of the present work brings up a hitherto unstudied angle in optimal transport problems, where additional data now reveal the dynamics of internal degrees of freedom in the flow. Paradigmatically, in either fluid flows or the flows of a flock of birds, tagging specific individuals (tracer particles or birds, respectively) may provide information about rotational components of the collective dynamics that are not reflected in the aggregate distributions.

This work explores the theme in an elementary setting of linear dynamics and Gaussian distributions. 
From a control perspective, we envision stochastic formulations, where measurement noise corrupts the available data, that mirror the deterministic setting presented herein.
From an applications standpoint, tracer information is naturally available in several fields. For instance, in computer vision, tracers may correspond to landmarks or facial features, and, in fluid flows, tracers are often injected for precisely the same reasons as postulated herein. Moreover, it is of interest to explore how the conceptualization presented can be utilized in related developments, such as \cite{halder2014geodesic, caluya2020finite, elamvazhuthi2023dynamical, mei2024flow}.

\balance
\bibliographystyle{ieeetr}
\bibliography{main.bib}
\end{document}